\documentclass{amsart}
\pagestyle{headings}


\setlength{\parindent}{2ex}

\addtolength{\hoffset}{-0.5in}
\addtolength{\textwidth}{1cm}

\numberwithin{equation}{section}

\usepackage{bbm}
\usepackage{comment}
\usepackage{verbatim}
\usepackage{mathrsfs}
\usepackage{mathtools}
\usepackage{latexsym}
\usepackage{epsfig}
\usepackage{amsmath}
\usepackage[usenames,dvipsnames]{xcolor}
\usepackage{graphics}
\usepackage[utf8]{inputenc}
\usepackage{amssymb}
\usepackage{amsthm}
\usepackage[alphabetic]{amsrefs}
\usepackage{amsopn}
\usepackage{amscd}
\usepackage[all,knot]{xy}
\xyoption{all}
\usepackage{rotating}
\usepackage{tikz}
\usetikzlibrary{calc}
\usepgflibrary{shapes.geometric}
\usepgflibrary{shapes.misc}
\usetikzlibrary{positioning}
\usetikzlibrary{decorations}
\usetikzlibrary{decorations.pathreplacing}
\usepackage{hyperref}
\usepackage{tikz}
\usepackage{tikz-cd}

\theoremstyle{plain}
\newtheorem{thm}{Theorem}[subsection]
\newtheorem{lem}[thm]{Lemma}
\newtheorem{prop}[thm]{Proposition}
\newtheorem{cor}[thm]{Corollary}

\newtheorem*{thm*}{Theorem}
\newtheorem*{lem*}{Lemma}
\newtheorem*{prop*}{Proposition}
\newtheorem*{cor*}{Corollary}

\theoremstyle{definition}
\newtheorem{defn}[thm]{Definition}
\newtheorem*{defn*}{Definition}

\newtheorem{ex}[thm]{Example}
{}
\newtheorem{rem}[thm]{Remark}
\newtheorem*{rem*}{Remark}

{}
{}
{}
\newtheorem*{ack}{Acknowledgements}{}
{}

\theoremstyle{remark}
{}
{}
{}


\def\to{\longrightarrow} 




\def\CC{\mathbb{C}}

\def\PP{\mathbb{P}}

\def\RR{\mathbb{R}}

\def\ZZ{\mathbb{Z}}
\DeclareUnicodeCharacter{221E}{$\infty$}


\def\sfA{\mathsf{A}}
\def\sfB{\mathsf{B}}
\def\sfC{\mathsf{C}}
\def\sfD{\mathsf{D}}

\def\sfM{\mathsf{M}}


\def\mcF{\mathcal{F}}

\def\mcK{\mathcal{K}}
\def\mcL{\mathcal{L}}

\def\mcO{\mathcal{O}}

\def\mcT{\mathcal{T}}



\def\mfm{\mathfrak{m}}

\def\mfp{\mathfrak{p}}



\def\Id{\mathrm{Id}}



\DeclareMathOperator{\add}{add}

\DeclareMathOperator{\Hom}{Hom}


\DeclareMathOperator{\modu}{\mathsf{mod}}
\DeclareMathOperator{\Modu}{\mathsf{Mod}}
\DeclareMathOperator{\comodu}{\mathsf{comod}}
\DeclareMathOperator{\Comodu}{\mathsf{Comod}}
\DeclareMathOperator{\smodu}{\underline{\mathsf{mod}}}
\DeclareMathOperator{\sModu}{\underline{\mathsf{Mod}}}


\DeclareMathOperator{\Ext}{Ext}

\DeclareMathOperator{\Tor}{Tor}




\DeclareMathOperator{\Spc}{Spc}



\def\SH{\mathsf{SH}}
\DeclareMathOperator{\Ho}{Ho}
\def\cyc{A}


\definecolor{internationalkleinblue}{rgb}{0.0, 0.18, 0.65}

\title{Homological residue fields as comodules over coalgebras}

\author{James C.\ Cameron}
\address{James C.\ Cameron, Department of Mathematics,
University of Utah,
155 S 1400 E
Salt Lake City, UT 84112
}
\email{cameron@math.utah.edu}
\urladdr{http://www.math.utah.edu/~cameron}

\author{Greg Stevenson}
\address{Greg Stevenson, Aarhus University, Department of Mathematics, Ny Munkegade 118, bldg. 1530
DK-8000 Aarhus C, Denmark
}
\email{greg@math.au.dk}
\urladdr{https://sites.google.com/view/gregstevenson}


\keywords{}

\begin{document}

\begin{abstract}
\noindent We explicitly present homological residue fields for tensor triangulated categories as categories of comodules in a number of examples across algebra, geometry, and topology. Our results indicate that, despite their abstract nature, they are very natural objects and encode tangent data at the corresponding point on the spectrum.
\end{abstract}

\maketitle




\section{Introduction}
Objects that play the role of fields are crucial to many computations in tensor triangular geometry (tt-geometry for short). They have been used in computing the Balmer spectrum in several examples, in extending support theories to ind-completions and classifying localizing subcategories, and in formulating and proving nilpotence statements. But, as always, there is a catch: we do not know that the tensor triangular analogue of residue fields exist in general, and we don't know how to abstractly construct the ones we have in examples.

Homological residue fields, introduced in \cite{BKS}, are an abelian substitute for tensor triangular residue fields which are engineered to describe a shadow of the coveted triangulated analogue. They exist in glorious generality, and have proved to be sufficient stand-ins for many purposes. For example, Balmer has used them to define a notion of support for arbitrary objects of a big tensor triangulated category \cite{balmerhomologicalsupports} and to detect the nilpotence of maps locally \cite{Balmer20b}.

These homological residue fields have many desirable properties, amongst them the advantage that their construction is uniform for all tensor triangulated categories. But, until recently, their precise form has been a mystery even in the simplest non-trivial examples. 

A first step toward describing these categories explicitly was given in \cite{BC20} where, in the presence of a tt-residue field, the homological residue field is presented as the category of comodules over an associated comonad. The main aim of this paper is to show that, in favourable situations, this comonad is simply tensoring with a bialgebra, and thus the homological residue field is, as a monoidal abelian category, the category of comodules for the corresponding bialgebra with the induced tensor product operation. This is encapsulated in the following theorem which summarises Theorem~\ref{thm:thepoint} and Corollary~\ref{cor:monoidal} applied to the tt-setting.

\begin{thm*}
Suppose that a big tensor triangulated category $\mcT$ admits a residue field functor $F\colon \mcT \to \mcK$, and let $C$ denote the associated comonad on $\Modu \mcK^c$. Assume that $\Modu \mcK^c$ is equivalent as a graded monoidal category to $\Modu^\ZZ R$ for a graded ring $R$. Then $C(R)$ is a bialgebra, $C$ is equivalent to $-\otimes_R C(R)$, and the homological residue field associated to $F$ is monoidally equivalent to $R$-linear comodules for the bialgebra $C(R)$.
\end{thm*}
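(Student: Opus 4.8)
The plan is to transport the entire problem across the given monoidal equivalence and then recognise $C$ as tensoring with a bialgebra by a graded Eilenberg--Watts argument. Write $\Phi \colon \Modu \mcK^c \xrightarrow{\sim} \Modu^\ZZ R$ for the chosen equivalence of graded monoidal categories and replace $C$ by the conjugated comonad $\Phi C \Phi^{-1}$, which I will again call $C$; since $\Phi$ is monoidal, $C$ retains any lax monoidal structure it carries, and the monoidal unit is now $R$, so that the object $C(R)$ is the value of $C$ at the tensor unit. The first task is to show that $C$ is colimit preserving. This follows from the construction of the comonad in \cite{BC20}: it is assembled from restriction and (co)induction of scalars along the functor $F^c \colon \mcT^c \to \mcK^c$ induced by $F$ on compacts, and each of these preserves colimits — extension of scalars is a left adjoint, while restriction preserves all colimits because colimits of graded modules are computed on underlying graded abelian groups. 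Being a composite of such functors, $C$ is cocontinuous; it is moreover graded $R$-linear because it is built from functors of graded module categories.

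With cocontinuity in hand I would invoke the graded Eilenberg--Watts theorem: a colimit-preserving, graded $R$-linear endofunctor of $\Modu^\ZZ R$ is canonically isomorphic to $- \otimes_R C(R)$, where the right $R$-module structure on $C(R)$ is obtained by applying $C$ to the right-multiplication endomorphisms of the generator $R$. This already yields the isomorphism $C \cong - \otimes_R C(R)$ of the statement, and reduces the remaining claims to unwinding the comonad and monoidal structures through this identification.

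Next I would read off the bialgebra structure on $B := C(R)$. Under $C \cong - \otimes_R B$ one has $C^2 \cong - \otimes_R (B \otimes_R B)$, so the comonad counit $\epsilon \colon C \to \Id$ and comultiplication $\delta \colon C \to C^2$ evaluate at $R$ to a counit $B \to R$ and a coassociative comultiplication $\Delta \colon B \to B \otimes_R B$; the comonad axioms become precisely the coalgebra axioms, making $B$ an $R$-coring. The lax monoidal structure maps $C(M) \otimes_R C(N) \to C(M \otimes_R N)$ and $R \to C(R)$, evaluated at $M = N = R$, supply a multiplication $B \otimes_R B \to B$ and a unit $R \to B$, making $B$ an $R$-algebra. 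Finally the compatibility between these two structures — the statement that $C$ is a \emph{monoidal comonad}, i.e.\ that $\epsilon$ and $\delta$ are monoidal natural transformations — translates verbatim into the bialgebra axioms asserting that $\Delta$ and $\epsilon$ are algebra maps. This checking of compatibility, and before it the verification that the defining functors endow $C$ with the structure of a monoidal comonad at all, is where I expect the genuine work to lie: cocontinuity and Eilenberg--Watts are formal, but tracking the monoidal coherence of the $F_! \dashv F^*$ data and matching it against the bialgebra axioms is the crux.

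For the last assertion, recall that the homological residue field is by definition the category of $C$-comodules in $\Modu \mcK^c \simeq \Modu^\ZZ R$. Under $C \cong - \otimes_R B$ a $C$-comodule is exactly a graded $R$-module equipped with a coassociative, counital coaction $M \to M \otimes_R B$, that is, an $R$-linear comodule over the coalgebra $B$; this gives an equivalence of abelian categories between $C$-comodules and $B$-comodules. The monoidal refinement is Corollary~\ref{cor:monoidal}: because $C$ is a monoidal comonad its category of comodules is monoidal, and under the equivalence the tensor product $M \otimes_R N$ of $B$-comodules carries the coaction built from $\rho_M$, $\rho_N$ and the multiplication $\mu$ of $B$ — so it is precisely the algebra part of the bialgebra structure on $B$ that makes $\Comodu_R(B)$ monoidal and matches it with the comodule category of $C$. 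Combining the three identifications gives the monoidal equivalence between the homological residue field of $F$ and the category of $R$-linear comodules over the bialgebra $C(R)$.
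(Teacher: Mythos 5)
Your proposal is correct and follows essentially the same route as the paper: transport to $\Modu^\ZZ R$, apply the graded Eilenberg--Watts theorem (Proposition~\ref{prop:EW}) to get $C\cong -\otimes_R C(R)$, read off the coring and bialgebra structure from the comonad and lax monoidal data (Theorem~\ref{thm:thepoint} and Corollary~\ref{cor:monoidal}, with the lax structure on $C=\hat F\hat U$ coming from the right adjoint of a monoidal functor being lax monoidal), and identify comonad comodules with coring comodules. The only step you pass over silently --- that evaluating $\varepsilon$, $\delta$ and the lax structure maps at $R$ loses no information and produces genuine bimodule morphisms, so that ``the comonad axioms become precisely the coalgebra axioms'' --- is exactly the paper's Lemma~\ref{lem:tensorembedding}, which proves $f\mapsto -\otimes_R f$ is a bijection on natural transformations.
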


We can compute this bialgebra completely explicitly in many examples and we find objects that are already of interest and well studied.


\begin{enumerate}
\item For a commutative noetherian ring $R$ and prime ideal $\mfp$ the homological residue field associated to the residue field $k(\mfp)$ is the category of graded comodules for the graded Hopf algebra $\Tor^R_*(k(\mfp),k(\mfp))$ (Section~\ref{ssec:alg}).
\item In the stable homotopy category: the homological residue field associated to the Morava $K$-theory $K(p,n)$ is the category of graded comodules over the graded Hopf algebra of cooperations on $K(p,n)$-homology (Section~\ref{ssec:top}).
\item In modular representation theory: if $k$ is a field of characteristic $2$ and $E$ is an elementary abelian 2-group of rank $r$, then the homological residue field corresponding to a cyclic shifted subgroup $k[x]/(x^2) \to kE$ is the category of comodules over the (ungraded) exterior algebra on $r-1$ generators with the usual Hopf algebra structure (Section~\ref{ssec:rep}).
\end{enumerate}

Different tt-categories can of course have the same tt-residue field (as categories), in much the same way as every closed point of any affine space over $\CC$ has residue field $\CC$. On the other hand, a prime ideal of a tt-category comes, by definition, with an embedding. The homological residue field remembers something about its origins, and the examples suggest this ancestral memory is tangent information at the chosen point.

The final sections of the paper discuss invariance of the homological residue field under changing the tt-field that we map to, i.e.\ a form of descent, and some standard duality statements that let us translate from comodules to modules in sufficiently regular settings.

\begin{ack}
We are grateful to Paul Balmer for precious discussions and his encouragement to pursue this idea, and to Scott Balchin and Paul Balmer for valuable comments on an earlier version of this manuscript.
\end{ack}



\section{Preliminaries}
\setcounter{subsection}{1}

We shall begin by reminding ourselves of the construction of the comonad associated to a tt-field and the connection to the corresponding homological residue field. This is a restriction: we don't know that tt-residue fields exist in general. However, the comonad on the tt-field will be key to our description and so we remain preoccupied with this setting. For full details on homological residue fields, and the other constructions used here, we refer to \cite{BKS}.

Let $\mcT$ be a big tt-category, i.e.\ $\mcT$ is a rigidly-compactly generated tensor triangulated category, and $\mcK$ a tt-field, i.e.\ $\mcK$ is a big tt-category which is not zero and such that for every $X\in \mcK$ the internal hom-functor $\hom(-,X)$ is faithful (again see \cite{BKS} for details, in particular Definition~1.1 and Theorem~5.21).

Given an exact coproduct preserving monoidal functor $F\colon \mcT \to \mcK$ there is an induced commutative square 
\[
\xymatrix{
\mcT \ar[r] \ar[d]_-{F} & \Modu \mcT^c  \ar[d]^-{\hat{F}}  \\
\mcK \ar[r] & \Modu \mcK^c
}
\]
where $\Modu \mcT^c$ (resp.\ $\Modu \mcK^c$) are the categories of contravariant additive functors from $\mcT^c$ (resp.\ $\mcK^c$) to abelian groups, the horizonal arrows are the restricted Yoneda functors, and $\hat{F}$ is the left Kan extension of $F$ composed with the restricted Yoneda functor. The categories $\Modu \mcT^c$ and $\Modu \mcK^c$ are symmetric monoidal Grothendieck categories, all functors occurring are monoidal and colimit preserving, and $\hat{F}$ is exact. The \emph{homological residue field associated to} $F\colon \mcT \to \mcK$ is $\Modu \mcT^c/ \ker \hat{F}$.

As $F$ and $\hat{F}$ are colimit preserving they admit right adjoints $U$ and $\hat{U}$ respectively (and as the notation suggests $\hat{U}$ is induced by $U$). This induces an adjunction relating the homological residue field and $\Modu \mcK^c$ and factoring the adjunction $\hat{F} \dashv \hat{U}$ through the coreflective localization $Q$ giving the homological residue field.
\[ \xymatrix{
\Modu \mcK^c \ar@<-.5ex>[r]_-{\bar{U}}  	& \Modu \mcT^c/ \ker \hat{F} \ar@<-.5ex>[l]_-{\bar{F}} \ar@<-.5ex>[r] &  \ar@<-.5ex>[l]_-{Q} \Modu \mcT^c
}.
\]
Our principal interest will be in the comonad $\hat{F}\hat{U} \cong \bar{F}\bar{U}$ which gives a description of the homological residue field.
\begin{thm}[\cite{BC20}]
The homological residue field $\Modu \mcT^c/ \ker \hat{F}$ is equivalent to comodules for the comonad $\bar{F}\bar{U}$.
\end{thm}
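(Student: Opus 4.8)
The plan is to recognize the asserted equivalence as a comonadicity statement and to deduce it from Beck's theorem, in its dual comonadic form, applied to the left adjoint $\bar{F}\colon \Modu\mcT^c/\ker\hat{F} \to \Modu\mcK^c$. Writing $\mcL := \Modu\mcT^c/\ker\hat{F}$, the adjunction $\bar{F}\dashv\bar{U}$ produces on $\Modu\mcK^c$ the comonad $G:=\bar{F}\bar{U}$, with counit the counit $\epsilon\colon \bar{F}\bar{U}\to\id$ of the adjunction and comultiplication $\bar{F}\eta\bar{U}$, where $\eta\colon \id_\mcL \to \bar{U}\bar{F}$ is the unit. The comparison functor $\mcL \to \comodu(G)$ sends $x$ to the object $\bar{F}x$ equipped with the coaction $\bar{F}(\eta_x)\colon \bar{F}x \to \bar{F}\bar{U}\bar{F}x = G\bar{F}x$, and the entire content of the theorem is that this functor is an equivalence, i.e.\ that $\bar{F}$ is comonadic.

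To invoke Beck's criterion I would check three things: that $\bar{F}$ admits a right adjoint, that $\bar{F}$ reflects isomorphisms, and that $\mcL$ has, and $\bar{F}$ preserves, equalizers of those parallel pairs that become split equalizers after applying $\bar{F}$. The first is immediate, since $\bar{U}$ is the required right adjoint by construction. For the third I would argue entirely in the abelian setting: $\mcL$ is a Grothendieck abelian category, so it has all equalizers, and since $\hat{F}$ is exact and $Q$ is an exact localization, the induced functor $\bar{F}$ is exact; an exact additive functor preserves kernels and hence all equalizers, so in particular it preserves those that Beck's criterion asks about. One is therefore even in the situation of crude comonadicity, and the only substantive point is conservativity.

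The conservativity of $\bar{F}$ is where the definition of the homological residue field does its work. By the universal property of the Serre quotient, $\hat{F}$ factors as $\bar{F}\circ Q$ with $\bar{F}$ exact, and on objects $\bar{F}(QM)\cong\hat{F}M$; hence $\bar{F}x\cong 0$ forces $x$ into $\ker\hat{F}$, i.e.\ $x\cong 0$ in $\mcL$. So $\bar{F}$ has trivial kernel, and an exact functor between abelian categories with trivial kernel is faithful: if $\bar{F}(f)=0$ then $\bar{F}(\im f)=\im\bar{F}(f)=0$, forcing $\im f = 0$ and hence $f=0$. A faithful exact functor reflects isomorphisms, since exactness carries $\ker f$ and $\coker f$ to $\ker\bar{F}(f)$ and $\coker\bar{F}(f)$, which vanish when $\bar{F}(f)$ is invertible, and faithfulness then yields $\ker f = \coker f = 0$. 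This gives conservativity and completes the verification of Beck's hypotheses, whence the comparison functor is an equivalence.

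The main obstacle I anticipate is not a hard computation but the bookkeeping needed to be sure the comonad produced by the factored adjunction $\bar{F}\dashv\bar{U}$ genuinely agrees with $\hat{F}\hat{U}$ under the identification $\bar{F}\bar{U}\cong\hat{F}\hat{U}$, so that the comodule category appearing in the statement is precisely the one Beck's theorem describes; this requires carefully tracking the coreflective localization $Q$ and its fully faithful section through the adjunction. Once that identification is pinned down, conservativity is the conceptual heart of the argument and the remaining conditions are formal consequences of working with exact functors between Grothendieck abelian categories.
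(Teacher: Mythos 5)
Your proof is correct and takes essentially the same route as the source: the paper states this theorem without proof, deferring to \cite{BC20}, where the equivalence is likewise established by verifying the dual Barr--Beck (comonadicity) criterion for $\bar{F}$ --- right adjoint $\bar{U}$, conservativity coming from exactness of $\bar{F}$ together with the triviality of its kernel by construction of the Serre quotient, and preservation of the relevant equalizers because an exact additive functor preserves all finite limits. The bookkeeping you flag at the end, identifying $\bar{F}\bar{U}$ with $\hat{F}\hat{U}$ via the fully faithful section of the localization $Q$, is exactly the identification the paper records in the displayed adjunction diagram preceding the theorem, so nothing further is needed.
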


\begin{rem}
The category $\Modu \mcT^c/ \ker \hat{F}$ is symmetric monoidal, but it is not clear from the theorem how this structure is taken into account. In Section~\ref{sec:monoidal} we make the observation that $\bar{F}\bar{U}$ is a monoidal comonad and this induces a symmetric monoidal structure on its category of comodules making the equivalence of the theorem monoidal.
\end{rem}


\section{From comonads to corings}
\setcounter{subsection}{1}

Let $R$ be a $\ZZ$-graded ring. Suppose that we are given a comonad $C$ on $\Modu^\ZZ R$ the category of graded (right) $R$-modules. Let $(1)$ denote the grading shift autoequivalence on $\Modu^\ZZ R$. We assume throughout that $C$ commutes with $(1)$ and preserves colimits.

\begin{rem}\label{rem:conditions}
These conditions are quite natural; they are automatically satisfied for the comonad associated to the adjunction between base change and restriction of scalars along a morphism of graded rings. In particular, it covers $\hat{F}\hat{U}$ of the previous section in several examples.
\end{rem}

The category $\Modu^\ZZ R$ carries a canonical enrichment over the category of graded abelian groups, via
\[
\hom_R(M,N) = \bigoplus_{i\in \ZZ} \Hom_R(M, N(i)).
\]
As the enrichment is canonical, and is determined by the autoequivalence $(1)$ on the underlying category, we will not introduce special notation to distinguish between the enriched and usual incarnations of $\Modu^\ZZ R$.

We have assumed $C$ is additive and commutes with the grading shift. It follows that $C$ is compatible with the enrichment over graded abelian groups, i.e.\ $C$ canonically lifts to an enriched comonad on the enriched category $\Modu^\ZZ R$. We are thus in a situation to apply the following graded analogue of the Eilenberg-Watts theorem.  

\begin{prop}\label{prop:EW}
Let $F\colon \Modu^\ZZ R\to \Modu^\ZZ S$ be an enriched, i.e.\ additive and grading preserving, colimit preserving functor. Then $FR$ is a graded $R$-$S$-bimodule and we have $F \cong -\otimes_R FR$.
\end{prop}
\begin{proof}
The argument is standard so we only give a sketch. By definition $FR$ is a right $S$-module. The enrichment provides the left action of $R$ via the map
\[
R = \hom_R(R,R) \to \hom_S(FR,FR)
\]
which also yields compatibility of the left and right actions.

For $M\in \Modu^\ZZ R$ we have a map $M = \hom_R(R,M) \to \hom_S(FR, FM)$ which corresponds via adjunction to the action map
\[ 
M\otimes_R FR \to FM.
\]
This is natural in $M$ and so yields an enriched natural transformation $-\otimes_R FR \to F$. It is an isomorphism at the generators $R(i)$ of $\Modu^\ZZ R$, and everything is compatible with colimits, so it is a natural isomorphism.
\end{proof}

So, given our comonad $C$, we learn that
\[
C \cong -\otimes_R C(R)
\]
for the graded $R$-$R$-bimodule $C(R)$. Next we consider the comonad structure, i.e.\ the maps $\varepsilon\colon C\to \Id$ and $\Delta\colon C\to C^2$. By the above isomorphism these can be viewed as elements of
\[
\Hom(-\otimes_R C(R), -\otimes_R R) \text{ and } \Hom(-\otimes_R C(R), -\otimes_R C(R) \otimes_R C(R))
\]
respectively. It happens that we understand these morphism sets.

\begin{lem}\label{lem:tensorembedding}
Let $X$ and $Y$ be graded $R$-$R$-bimodules. Then the natural map
\[
\Hom(X,Y) \stackrel{\gamma}{\to} \Hom(-\otimes_R X, -\otimes_R Y) 
\]
given by $f\mapsto -\otimes_R f$ is an isomorphism.
\end{lem}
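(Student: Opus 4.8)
The plan is to construct an explicit inverse to $\gamma$ by evaluating a natural transformation at the free module $R$; this is the ``fully faithful'' counterpart of the Eilenberg--Watts statement in Proposition~\ref{prop:EW}, and its proof runs in parallel. Given $\eta \in \Hom(-\otimes_R X, -\otimes_R Y)$, I would set $\delta(\eta) := \eta_R$, using the canonical bimodule identifications $R\otimes_R X \cong X$ and $R\otimes_R Y \cong Y$. Since each component of a natural transformation is a morphism in $\Modu^\ZZ R$, the map $\eta_R$ is automatically right $R$-linear, so the only thing to verify in order to know that $\delta$ lands in $\Hom(X,Y)$ is that $\eta_R$ is also left $R$-linear.

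Left linearity is the one place where the graded (enriched) structure is genuinely needed, and is the point I would treat most carefully. For homogeneous $r\in R$, left multiplication by $r$ is a homogeneous element $\rho_r \in \hom_R(R,R)$ of degree $|r|$. Applying $-\otimes_R X$ and using $R\otimes_R X \cong X$ carries $\rho_r$ to the map $\ell_r^X\colon x\mapsto rx$ on $X$, and similarly on $Y$. Enriched naturality of $\eta$ against $\rho_r$ then reads $\ell_r^Y\circ\eta_R = \eta_R\circ \ell_r^X$, that is $r\cdot\eta_R(x) = \eta_R(r\cdot x)$, which is exactly left linearity. It is essential here that $\Hom$ denotes enriched natural transformations (equivalently, those compatible with the shift $(1)$): naturality against degree-zero morphisms alone would not see the full left action of $R$, and this is precisely why the lemma lives in the graded setting.

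It then remains to check that $\delta$ and $\gamma$ are mutually inverse. The composite $\delta\gamma$ is the identity essentially by definition, since for a bimodule map $f$ the component $\gamma(f)_R = R\otimes_R f$ is $f$ under the identification. For $\gamma\delta = \Id$ I would argue that a natural transformation between colimit-preserving functors is determined by its value at $R$: both $\eta$ and $\gamma(\eta_R)$ are additive, compatible with the shift $(1)$, and agree at $R$, hence agree at every $R(n)$ and, because the two functors preserve coproducts, at every graded free module. For general $M$ I would pick a free presentation $F_1 \to F_0 \to M \to 0$; as $-\otimes_R X$ and $-\otimes_R Y$ are right exact and colimit preserving, applying them produces presentations of $M\otimes_R X$ and $M\otimes_R Y$, and naturality identifies $\eta_M$ and $\gamma(\eta_R)_M$ as the same induced map of cokernels. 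Since $\gamma$ is visibly additive, being a bijection makes it an isomorphism of the relevant $\Hom$ groups.

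I do not expect any single computation to be the real obstacle; the work lies in keeping the enriched bookkeeping honest. The genuinely delicate step is extracting left $R$-linearity from naturality against the degree-shifting multiplication maps, since this is where the graded enrichment is indispensable. The surjectivity reduction to free modules, by contrast, is routine once Proposition~\ref{prop:EW} is in hand, as it reuses the same right-exactness and colimit-preservation of the tensor functors.
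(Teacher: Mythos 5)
Your proof is correct and follows essentially the same route as the paper's: the inverse to $\gamma$ is obtained by evaluating at $R$, and agreement of $\eta$ with $\gamma(\eta_R)$ on all modules is deduced from additivity, shift-compatibility, coproduct preservation, and naturality against a surjection from a graded free module $\bigoplus_i R(i)^{\alpha_i}$. The only difference is one of emphasis: you make explicit the left $R$-linearity of $\eta_R$ via enriched naturality against the degree-shifting multiplication maps $\rho_r$, a point the paper leaves implicit in its appeal to ``grading shift preservation,'' and you are right that this is exactly where the enriched (shift-compatible) reading of $\Hom$ is indispensable.
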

\begin{proof}
It's clear that $\gamma$ has trivial kernel: if $\gamma(f)=0$ then $\gamma(f)_R\colon R\otimes_R X\to R\otimes_R Y$ is trivial, and this is just $f$ up to natural isomorphisms.

In order to show surjectivity it is enough to prove that given $\phi\in \Hom(-\otimes_R X, -\otimes_R Y)$ we have $\phi = \gamma(\phi_R)$. Let $M$ be a graded $R$-module and choose a surjection
\[
\bigoplus_{i\in \ZZ} R(i)^{\alpha_i} \to M.
\]
Tensoring with $X$ and $Y$ we get, using colimit and grading shift preservation, a commutative diagram
\[
\xymatrix{
\oplus_i X(i)^{\alpha_i} \ar[r] \ar[d]_-{\oplus_i \phi_R(i)^{\alpha_i}} & M\otimes_R X \ar[d]<1ex>^-{\phi_M} \ar[d]<-1ex>_-{M\otimes_R \phi_R}  \\
\oplus_i Y(i)^{\alpha_i} \ar[r] & M\otimes_R Y 
}
\]
where the horizontal maps are still surjective. It follows that $\phi_M = M\otimes_R \phi_R$ as required.
\end{proof}

From the lemma we learn that the comonad structure on $C \cong -\otimes_R C(R)$ is determined by morphisms of bimodules $C(R) \to R$ and $C(R) \to C(R)\otimes_R C(R)$, i.e.\ there is a comonoid structure on $C(R)$ in the monoidal category of $R$-$R$-bimodules, and we recover $C$, as a comonad, via $-\otimes_R C(R)$.

\begin{rem}
This tells us that $C(R)$ is a coring. In many cases of interest $R$ will be a graded field. Provided $R$ acts symmetrically on $C(R)$ we then have that $C(R)$ is a graded $R$-coalgebra.
\end{rem}

We can summarise our discussion as follows.

\begin{thm}\label{thm:thepoint}
Let $C$ be a colimit and grading shift preserving comonad on $\Modu^\ZZ R$. Then $C(R)$ is a graded coring, and $C$ is naturally isomorphic to $-\otimes_R C(R)$ as a comonad. The category $\Comodu C$ of comodules over the comonad $C$ is equivalent to the category of $R$-linear graded comodules $\Comodu^\ZZ_R C(R)$ over the coring $C(R)$.
\end{thm}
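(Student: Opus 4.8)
\emph{Proof proposal.} The plan is to observe that the statements ``$C(R)$ is a coring'' and ``$C\cong -\otimes_R C(R)$ as a comonad'' are essentially the content of the discussion preceding the theorem, and then to deduce the comodule equivalence by a short Eilenberg--Moore argument. The one thing that must be upgraded from that discussion is Lemma~\ref{lem:tensorembedding}: rather than a bijection on individual Hom-sets, I want the assignment $\Phi\colon X\mapsto(-\otimes_R X)$, $\Phi(f)=\gamma(f)$, to be a \emph{monoidal} functor from the category of graded $R$-$R$-bimodules (with $\otimes_R$ and unit $R$) to the category of colimit- and shift-preserving endofunctors of $\Modu^\ZZ R$ (with composition and unit $\Id$). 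Fully faithfulness of $\Phi$ is Lemma~\ref{lem:tensorembedding}, essential surjectivity is Proposition~\ref{prop:EW}, and the monoidal structure comes from the canonical isomorphisms $(-\otimes_R X)\circ(-\otimes_R Y)\cong -\otimes_R(Y\otimes_R X)$ together with $\Id \cong -\otimes_R R$. A strong monoidal equivalence carries comonoids to comonoids, so the comonad structure $(\varepsilon,\Delta)$ on $C$ corresponds under $\Phi$ to a comonoid structure $(\varepsilon_R,\Delta_R)$ on the bimodule $C(R)$, i.e.\ to a graded coring; this yields the first two assertions.

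For the comodule equivalence I would argue as follows. By definition $\Comodu C$ is the Eilenberg--Moore category of coalgebras for the comonad $C$: pairs $(M,\rho)$ with $\rho\colon M\to C(M)$ satisfying the counit and coassociativity laws. Under the identification $C\cong -\otimes_R C(R)$ the structure map is the same datum $\rho\colon M\to M\otimes_R C(R)$, and I claim the two sets of axioms coincide. Using that $\varepsilon=\Phi(\varepsilon_R)$ and $\Delta=\Phi(\Delta_R)$ we have, componentwise, $\varepsilon_M = M\otimes_R\varepsilon_R$ and $\Delta_M = M\otimes_R\Delta_R$, while functoriality of $C$ gives $C(\rho)=\rho\otimes_R\id_{C(R)}$. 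Substituting these into the comonad counit law $\varepsilon_M\circ\rho=\id_M$ and coassociativity $\Delta_M\circ\rho = C(\rho)\circ\rho$ produces precisely the defining axioms $(\id_M\otimes\varepsilon_R)\circ\rho=\id_M$ and $(\id_M\otimes\Delta_R)\circ\rho = (\rho\otimes\id_{C(R)})\circ\rho$ of an $R$-linear graded $C(R)$-comodule. As morphisms of comodules are in both cases the $R$-linear maps intertwining the coactions, this is an equivalence --- indeed an isomorphism of categories --- identifying $\Comodu C$ with $\Comodu^\ZZ_R C(R)$. Since every object already lives in $\Modu^\ZZ R$, gradedness and $R$-linearity are automatic.

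The genuinely delicate step is the bookkeeping of whiskered natural transformations that makes the comonad coassociativity match the coring coassociativity: one must check that $\Phi$ intertwines composition of endofunctors with $\otimes_R$ of bimodules \emph{with the correct placement of tensor factors}, so that $\Delta_M = M\otimes_R\Delta_R$ and $C(\rho)=\rho\otimes_R\id_{C(R)}$ have their tensor slots in the right positions. Because all factors occurring are copies of $C(R)$, the potential reversal in $(-\otimes_R X)\circ(-\otimes_R Y)\cong -\otimes_R(Y\otimes_R X)$ is harmless, and each such identity reduces to the faithfulness of $\gamma$ from Lemma~\ref{lem:tensorembedding}. Everything else is a formal consequence of Proposition~\ref{prop:EW} and Lemma~\ref{lem:tensorembedding}, so I do not expect any further obstruction.
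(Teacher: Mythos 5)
Your proposal is correct and takes essentially the same approach as the paper: the paper's own proof just notes that the first statement was already established in the discussion via Proposition~\ref{prop:EW} and Lemma~\ref{lem:tensorembedding}, and declares the comodule comparison ``a straightforward exercise in translation.'' You simply carry out that exercise explicitly (including the harmless tensor-reversal bookkeeping in $(-\otimes_R X)\circ(-\otimes_R Y)\cong -\otimes_R(Y\otimes_R X)$), so there is nothing to correct.
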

\begin{proof}
We have already proved the first statement. Given this we are reduced to a straightforward exercise in translation. 
\end{proof}



\section{The monoidal structure}\label{sec:monoidal}
\setcounter{subsection}{1}

The homological residue fields of \cite{BKS} are not only Grothendieck categories. They carry a compatible symmetric monoidal structure reflecting the original tensor structure on the triangulated category. This additional structure is not explained by the equivalence of \cite{BC20} with comodules over the comonad corresponding to a tt-residue field. In order to clarify this the correct context in which to work is the following.

\begin{defn}\label{defn:mc}
Let $(C, \Delta, \varepsilon)$ be a comonad on a monoidal category $\sfM$. Then $C$ is a \emph{monoidal comonad} if $C$ is a lax monoidal functor and $\varepsilon$ and $\Delta$ are monoidal natural transformations.
\end{defn}

Let us fix a monoidal comonad as in the definition. The Eilenberg-Moore category of comodules $\Comodu C$ inherits a monoidal structure from $\sfM$: given $(M_i,\rho_i)$ for $i\in \{1,2\}$ we define a comodule structure on $M_1\otimes M_2$ via
\[
\xymatrix{
M_1 \otimes M_2 \ar[rr]^-{\rho_1\otimes \rho_2} && CM_1 \otimes CM_2 \ar[r] & C(M_1\otimes M_2)
}
\]
where the second map is courtesy of the lax monoidal structure. The various coherence conditions follow from those for the lax monoidal structure of $C$ and the fact that the comultiplication and counit are monoidal natural transformations.

\begin{ex}
Suppose that we are in the situation of \cite{BC20}, i.e.\ $\mcT$ is a big tt-category and $F\colon \mcT \to \mcK$ is an exact coproduct preserving monoidal functor to a tt-field. Then $F$ admits a coproduct preserving right adjoint $U$ which is automatically lax monoidal. Thus $FU$ is lax monoidal, and hence so is the induced comonad on $\Modu \mcK^c$.

This observation explains the monoidal structure on the homological residue field in terms of the comonadic description of \cite{BC20}*{Theorem~4.2}.
\end{ex}

From the algebraic point of view of Theorem~\ref{thm:thepoint} we expect a monoidal structure on a category of comodules over a coring to correspond to a bialgebra structure. This is exactly what we get.

\begin{cor}\label{cor:monoidal}
Let $R$ be a graded commutative ring and let $C$ be a colimit and grading shift preserving comonad on $\Modu^\ZZ R$. Suppose in addition that $C$ is a monoidal comonad. Then $C(R)$ is a graded bialgebra and the equivalence of Theorem \ref{thm:thepoint} is monoidal.
\end{cor}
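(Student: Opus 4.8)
The coring structure on $C(R)$ is already furnished by Theorem~\ref{thm:thepoint}, so the plan is to extract the missing algebra structure from the lax monoidal structure of $C$ and then read off the bialgebra compatibility from the hypothesis that $\varepsilon$ and $\Delta$ are monoidal natural transformations. Since $R$ is graded commutative it is the monoidal unit of $\Modu^\ZZ R$, and the lax monoidal structure of $C$ amounts to a unit map $\eta\colon R\to C(R)$ together with natural maps $\mu_{M,N}\colon C(M)\otimes_R C(N)\to C(M\otimes_R N)$. I would set $u:=\eta$ and $m:=\mu_{R,R}\colon C(R)\otimes_R C(R)\to C(R)$, using $R\otimes_R R\cong R$. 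The associativity and unitality coherence axioms for a lax monoidal functor, specialised at the unit object, are exactly the associativity and unitality axioms making $(C(R),m,u)$ a graded $R$-algebra; this step is essentially immediate.

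For the compatibility I would unwind the two monoidal-natural-transformation conditions at the object $R$. That $\varepsilon\colon C\to\Id$ is monoidal says precisely $\varepsilon_{M\otimes N}\circ\mu_{M,N}=\varepsilon_M\otimes\varepsilon_N$ and $\varepsilon_R\circ\eta=\id_R$; evaluated at $R$ these read that $\varepsilon\colon C(R)\to R$ is a homomorphism of $R$-algebras. That $\Delta\colon C\to C^2$ is monoidal is the more substantial condition: using $C\cong -\otimes_R C(R)$ we have $C^2(R)\cong C(R)\otimes_R C(R)$, and the lax structure on the composite $C^2$ equips this with the tensor-product algebra structure, so the monoidality of $\Delta$ states exactly that $\Delta\colon C(R)\to C(R)\otimes_R C(R)$ is an $R$-algebra homomorphism. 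Together with the coring structure these are precisely the bialgebra axioms, and hence $C(R)$ is a graded bialgebra.

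It then remains to upgrade the equivalence of Theorem~\ref{thm:thepoint} to a monoidal one, comparing the tensor product on $\Comodu C$ coming from the monoidal comonad (as in Section~\ref{sec:monoidal}) with that on $\Comodu^\ZZ_R C(R)$ coming from the bialgebra. The key preliminary is to pin down $\mu_{M,N}$ for general $M,N$ in terms of $m$. By an Eilenberg--Watts argument in two variables, in the spirit of Lemma~\ref{lem:tensorembedding} and using naturality together with preservation of colimits and grading shifts (every graded module being a colimit of shifts of $R$), the map $\mu_{M,N}$ is forced to be the composite
\[
\begin{aligned}
C(M)\otimes_R C(N)
&= M\otimes_R C(R)\otimes_R N\otimes_R C(R) \\
&\xrightarrow{\,1\otimes\tau\otimes 1\,} M\otimes_R N\otimes_R C(R)\otimes_R C(R) \\
&\xrightarrow{\,1\otimes m\,} M\otimes_R N\otimes_R C(R),
\end{aligned}
\]
where $\tau$ denotes the symmetry of $\Modu^\ZZ R$. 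Substituting this into the comodule coaction $M_1\otimes M_2\xrightarrow{\rho_1\otimes\rho_2}C(M_1)\otimes C(M_2)\xrightarrow{\mu}C(M_1\otimes M_2)$ reproduces verbatim the coaction defining the tensor product of comodules over the bialgebra $C(R)$; the unit object ($R$ with coaction $u$) matches on both sides, and the associativity and unit constraints are inherited from $\Modu^\ZZ R$. Hence the translation of Theorem~\ref{thm:thepoint} is monoidal.

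I expect the main obstacle to be exactly this last identification of $\mu_{M,N}$ together with the correct bookkeeping of the symmetry $\tau$: one must ensure that the multiplication $m$ on $C(R)$ is threaded through the two tensor factors in precisely the manner the bialgebra comodule tensor product prescribes, and that the induced tensor-product algebra structure on $C^2(R)$ is the one used in the $\Delta$-compatibility. The algebra axioms and the compatibility conditions are otherwise a direct and essentially formal translation of the monoidal comonad structure once this dictionary is in place.
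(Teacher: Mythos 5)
Your proof is correct and follows essentially the same route as the paper: you extract $\eta$ and $m=\mu_{R,R}$ from the lax structure via the embedding of bimodule maps into natural transformations, get the algebra axioms from lax coherence, and read the bialgebra compatibility off the monoidality of $\varepsilon$ and $\Delta$ (your formulation that $\varepsilon$ and $\Delta$ are algebra homomorphisms encodes the same diagrams as the paper's statement that $\eta$ and $\nabla$ are coring maps). Your two-variable Eilenberg--Watts identification of $\mu_{M,N}$ through the symmetry $\tau$ is exactly the paper's appeal to Lemma~\ref{lem:tensorembedding} ``up to rearranging terms using the symmetry constraint,'' and your explicit matching of the comonadic tensor product with the bialgebra comodule tensor product merely fills in what the paper leaves as an implicit translation exercise.
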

\begin{proof}
The lax monoidal structure is given (up to rearranging terms using the symmetry constraint) by natural transformations
\[
\Hom(-\otimes_R R, -\otimes_R C(R)) \text{ and } \Hom(-\otimes_R C(R)\otimes_R C(R), -\otimes_R C(R)).
\]
By Lemma~\ref{lem:tensorembedding} these correspond to bimodule morphisms
\[
\eta\colon R \to C(R) \text{ and } \nabla\colon C(R)\otimes_R C(R) \to C(R),
\]
which give an algebra structure by virtue of the coherence conditions for a lax monoidal functor. They are coring maps by virtue of corresponding to monoidal natural transformations.

\end{proof}



\section{The examples}

We now consider three concrete cases of interest, namely the residue fields arising in algebraic geometry, stable homotopy theory, and modular representation theory.


\subsection{Algebraic geometry}\label{ssec:alg}

Our starting point is a noetherian local ring $(R,\mfm,k)$. This is sufficient generality: given a point on a locally noetherian scheme we can always pass to the stalk to arrive in this situation and the residue field factors through this localization. In fact we could even complete if we wished to.

Consider the adjunction induced by $\phi\colon R\to k$, namely
\[
\phi^*\colon \sfD(R) \xymatrix{\ar[r]<0.5ex> \ar@{<-}[r]<-0.5ex> & } \sfD(k)\colon \phi_*
\]
and note that we omit the decoration indicating $\phi^*$ is derived. It is shown in \cite{BC20} that the homological residue field is given by comodules over the comonad $\phi^*\phi_*$ on $\sfD(k)$. We exploit the fact that $\sfD(k)$ is abelian: it is equivalent to $\Modu^\ZZ k$, and under this equivalence the grading shift corresponds to the suspension. Both $\phi^*$ and $\phi_*$ preserve colimits, and are compatible with the grading by virtue of commuting with suspension. The same is then true of the composite, i.e.\ $\phi^*\phi_*$ is colimit preserving and compatible with the grading.

We are thus in position to invoke Theorem~\ref{thm:thepoint} and Corollary~\ref{cor:monoidal}. The bialgebra in question is the graded $k$-bialgebra
\[
\phi^*\phi_* k = \Tor^R_*(k,k)
\] 
and so the homological residue field of $R$ at $k$ is the category of graded comodules over $\Tor^R_*(k,k)$, together with the tensor product induced by the algebra structure. We see that the homological residue field packages the tangent information recorded by $\phi^*$ into the structure of the category.

\begin{rem}\label{rem:algHopf}
In fact $\Tor^R_*(k,k)$ carries more information. It is a graded Hopf algebra. This corresponds to the fact that the finite dimensional modules are dualizable (i.e.\ that $\sfD(k)$ is abelian).
\end{rem}


\subsection{Topology}\label{ssec:top}

There is an analogous story for the residue fields of the stable homotopy category $\SH$. Fixing a prime $p$ and an $n\geq 0$ we can consider the Morava $K$-theory spectrum $K(p,n)$ and the associated adjunction
\[
F\colon \SH \xymatrix{\ar[r]<0.5ex> \ar@{<-}[r]<-0.5ex> & } \Ho(\Modu K(p,n))\colon U
\]
between the stable homotopy category and the homotopy category of $K(p,n)$-module spectra. See \cite{Ravenel92} for a discussion of Morava $K$-theory and its role as a residue field for $\SH$. Since the coefficients of Morava $K$-theory $K(p,n)_*$ form a graded field the category $\Ho(\Modu K(p,n))$ is equivalent to the semisimple abelian category $\Modu^\ZZ K(p,n)_*$. 

By the same reasoning as in the previous section (cf.\ \cite{BC20}*{Corollary~4.4}) we can apply Theorem~\ref{thm:thepoint} and Corollary~\ref{cor:monoidal}. We see that the homological residue field in this case is the category of graded comodules over the graded bialgebra $K(p,n)_*(K(p,n))$ of stable cooperations on the relevant Morava $K$-theory.

\begin{rem}\label{rem:topHopf}
As in Remark~\ref{rem:algHopf} the bialgebra $K(p,n)_*(K(p,n))$ is actually a Hopf algebra.
\end{rem}


\subsection{Modular representation theory}\label{ssec:rep}

In the case of modular representation theory the situation is richer, and so we are more modest in our aims. In fact this setting differs in a few key points, not least of which is the fact that our tt-residue fields are, in general, no longer semisimple abelian categories.

Fix a prime $p$ and consider an elementary abelian $p$-group $E$, of rank $r$, and a field $k$ of characteristic $p$. Let $\cyc$ denote the cyclic group with $p$ elements. We will usually think of $kE$ as the truncated polynomial ring $kE \cong k[x_1,\ldots,x_r]/(x_1^p,\ldots,x_r^p)$, and similarly $k\cyc \cong k[t]/(t^p)$.

Pick a $k$-point $a\in \PP^{r-1}_k$ and consider the map defined by the corresponding cyclic shifted subgroup
\[
\phi \colon k\cyc \to kE \text{ via } t \mapsto \sum_{i=1}^r a_ix_i.
\]
Viewing $k\cyc$ and $kE$ as Hopf algebras via their incarnation as enveloping algebras of $p$-restricted Lie algebras, i.e.\
\[
\Delta(t) = t\otimes 1 + 1\otimes t \text{ and } S(t) = -t
\]
and similarly for $kE$, the map $\phi$ is a map of Hopf algebras. Thus $\phi_*\colon \sModu kE \to \sModu k\cyc$ is a monoidal exact functor giving rise to a tt-residue field for the point 
\[
a\in \PP^{r-1}_k \cong \Spc \smodu kE.
\]

If we allow field extensions of $k$ then all homological residue fields arise from such cyclic shifted subgroups, i.e.\ to parametrize the homological (and tensor triangulated) spectrum we use the entire scheme $\PP^{r-1}_k$ not just the $k$-points. For simplicity of presentation we ignore field extensions here.

If $p\neq 2$ then $\smodu k\cyc$ is not abelian, but it is still quite manageable. The description involves the preprojective algebra of a quiver; we refer to \cite{MR1648647} for the definition and several equivalent descriptions, and we give a presentation of the algebras relevant to us in Remark~\ref{rem:ppa}.

\begin{lem}
The stable module category $\smodu k\cyc$ is Morita equivalent to $\Pi = \Pi_{A_{p-1}}$ the preprojective algebra of Dynkin type $A_{p-1}$, i.e.\
\[
\Modu \smodu k\cyc \cong \Modu \Pi.
\]
The suspension on $\smodu k\cyc$ corresponds to the automorphism of $\Pi$ induced by reflection about the central arrow of $A_{p-1}$ (with the convention that this is the identity when $p=2$).
\end{lem}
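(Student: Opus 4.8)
The plan is to reduce the statement to an explicit computation of a finite-dimensional stable endomorphism algebra. Recall that $k\cyc \cong k[t]/(t^p)$ is a self-injective Nakayama algebra whose indecomposable modules are $M_i = k[t]/(t^i)$ for $1\le i \le p$, with $M_p = k\cyc$ the unique indecomposable projective-injective. Thus $\smodu k\cyc$ is a Krull--Schmidt category with exactly $p-1$ indecomposable objects $M_1,\dots,M_{p-1}$. Setting $X = \bigoplus_{i=1}^{p-1} M_i$ and $\Lambda = \underline{\End}(X)$, the standard theory of functor categories over an additive category with finitely many indecomposables gives an equivalence $\Modu\smodu k\cyc \simeq \Modu\Lambda$, sending a functor to its value on $X$. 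It therefore suffices to produce an isomorphism $\Lambda \cong \Pi_{A_{p-1}}$.

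First I would compute the stable hom-spaces. A short calculation gives $\dim_k \underline{\Hom}(M_i,M_j) = \min(i,j,p-i,p-j)$, obtained by subtracting from $\dim_k \Hom_{k\cyc}(M_i,M_j) = \min(i,j)$ the dimension $\max(0,i+j-p)$ of the subspace of maps factoring through the projective $M_p$. In particular the Gabriel quiver of $\Lambda$ has vertices $1,\dots,p-1$, and its only arrows are the classes of the irreducible maps $\iota_i\colon M_i \to M_{i+1}$ (sending $1\mapsto t$) and $\pi_i\colon M_{i+1}\to M_i$ (the canonical surjection) for $1\le i \le p-2$; longer homs such as $M_1\to M_3$ are composites and lie in $\rad^2$. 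This is precisely the double quiver of $A_{p-1}$, and the displayed formula agrees vertex by vertex with the known dimensions $\dim_k e_i\Pi_{A_{p-1}}e_j = \min(i,j,p-i,p-j)$ of the preprojective algebra.

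The heart of the argument is to match the relations. I would define $\Phi\colon \Pi_{A_{p-1}}\to \Lambda$ on the double quiver by $a_i\mapsto \iota_i$ and $a_i^*\mapsto \pi_i$, and verify the mesh relations. The key observation is that both composites $\pi_i\iota_i$ and $\iota_{i-1}\pi_{i-1}$ are \emph{literally} equal to multiplication by $t$ on $M_i$, already as $k\cyc$-module maps, so their difference vanishes and the interior preprojective relations hold on the nose. At the two ends the relevant composite is again multiplication by $t$: on $M_1 = k$ it is zero, and on $M_{p-1}$ it equals $\pi_{p-1}\iota_{p-1}$, which factors through $M_p$ and is therefore stably zero, giving the two boundary relations. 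Hence $\Phi$ is a well-defined algebra map; it is surjective because $\rad\Lambda$ is generated by the irreducible maps, and since $\dim_k \Pi_{A_{p-1}} = \dim_k \Lambda$ by the matching of graded pieces, $\Phi$ is an isomorphism. Composing with the equivalence above yields $\Modu\smodu k\cyc \cong \Modu\Pi$.

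Finally, for the suspension I would compute the syzygy $\Omega M_i = \ker(M_p \twoheadrightarrow M_i) = t^iM_p \cong M_{p-i}$, so $\Omega$ is an involution on objects realising $i\mapsto p-i$, the reflection of $A_{p-1}$ about its centre; for $p=2$ the single vertex is fixed, matching the stated convention. Applying $\Omega$ to an irreducible map $\iota_i\colon M_i\to M_{i+1}$ produces a map $M_{p-i}\to M_{p-i-1}$, that is, up to scalar $\pi_{p-i-1}$, so $\Omega$ interchanges the two families $\iota\leftrightarrow\pi$ exactly as the reflection interchanges $a\leftrightarrow a^*$; thus the induced autoequivalence of $\Modu\Pi$ is the algebra automorphism coming from the reflection. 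I expect the main obstacle to be the bookkeeping in this last step: confirming that $\Omega$ acts on morphisms precisely as the reflection, rather than the reflection composed with some spurious twist by units on each $M_i$, which requires a compatible choice of the generators $\iota_i,\pi_i$ and of the identifications $\Omega M_i\cong M_{p-i}$ across all $i$ at once.
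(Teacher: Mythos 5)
Your proposal is correct and follows essentially the same route as the paper, which likewise reduces the statement to computing the stable endomorphism ring of $\bigoplus_{i=1}^{p-1} k[t]/(t^i)$ and then simply cites the identification with $\Pi_{A_{p-1}}$ as well known — the content you supply in detail (hom dimensions, mesh relations, dimension count). The bookkeeping you flag at the end in fact closes on the nose: with the natural identifications $\Omega M_i = t^i M_p \cong M_{p-i}$, lifting $\iota_i$ and $\pi_i$ through the projective covers $M_p \twoheadrightarrow M_i$ gives $\Omega\iota_i = \pi_{p-i-1}$ and $\Omega\pi_i = \iota_{p-i-1}$ exactly, with no spurious unit twists.
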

\begin{proof}
Identifying $k\cyc$ with $k[t]/(t^p)$ as above, the stable category can be described as 
\[
\smodu k\cyc = \add(k[t]/(t^i) \mid 1\leq i \leq p-1).
\]
Thus the statement boils down to computing the endomorphism ring of $\oplus_i k[t]/(t^i)$ in the stable category, which is well known to be $\Pi$. 
\end{proof}

\begin{rem}\label{rem:ppa}
The algebra $\Pi$ can be described by the quiver
\[
\xymatrix{
1 \ar[r]<0.5ex>^-{\alpha_1} \ar@{<-}[r]<-0.5ex>_-{\beta_1} & 2 \ar[r]<0.5ex>^-{\alpha_2} \ar@{<-}[r]<-0.5ex>_-{\beta_2} & \cdots \ar[r]<0.5ex>^-{\alpha_{p-3}} \ar@{<-}[r]<-0.5ex>_-{\beta_{p-3}} & p-2 \ar[r]<0.5ex>^-{\alpha_{p-2}} \ar@{<-}[r]<-0.5ex>_-{\beta_{p-2}} & p-1
}
\]
with the relations
\[
\beta_1\alpha_1 = 0 = \alpha_{p-2}\beta_{p-2} \text{ and } \alpha_i\beta_i = \beta_{i+1}\alpha_{i+1} \text{ for } 2\leq i \leq p-3.
\]
In this context $\Pi$ also goes by the moniker \emph{stable Auslander algebra} of $kA$. 
\end{rem}

So, we are led to consider the comonad $C$ on $\Modu \smodu k\cyc \cong \Modu \Pi$ induced by the adjunction
\[
\phi_*\colon \sModu kE \xymatrix{\ar[r]<0.5ex> \ar@{<-}[r]<-0.5ex> & } \sModu kA\colon \phi^!
\]
where $\phi^!$ denotes the coinduction functor. In other words, $C$ is base change along $\phi_*\phi^!\colon \smodu kA \to \smodu kA$, and so preserves colimits. Since $\phi_*$ and $\phi^!$ are triangulated functors they commute with suspension, and so $C$ commutes with the involution on $\Modu \Pi$ induced by reflection about the central arrow. Thus we can apply the (usual!) Eilenberg-Watts theorem, and the ungraded analogue of Lemma~\ref{lem:tensorembedding} to see that $C$ is determined by the coring $C(\Pi)$, i.e.\ by a coring structure on the image of
\[
\bigoplus_{i=1}^{p-1}\phi_*\phi^!k[t]/(t^i) = \bigoplus_{i=1}^{p-1} \phi_*\Hom_{k\cyc}(kE, k[t]/(t^i)) = \bigoplus_{i=1}^{p-1} k[t]/(t^i)^{\oplus {p^{r-1}}}
\]
under the Yoneda embedding. 


\subsubsection{The case $p=2$}

In this case $\Pi = \Pi_{A_1} = k$ and we are working in the category of $k$-vector spaces. The coalgebra of interest is, as a vector space, 
\[
C(\Pi) = k^{\oplus 2^{r-1}}.
\]
It is natural to guess it is the (ungraded!) exterior bialgebra on $r-1$ generators, and we claim this is the case. In fact, this is easily checked directly by making a change of coordinates and working with the adjunctions corresponding to the map of $k$-algebras
\[
k[x]/(x^2) \to k[x,y_1,\ldots,y_{r-1}]/(x^2,y_1^2,\ldots,y_{r-1}^2).
\]

Thus the homological residue field at a $k$-point of $\Spc \smodu kE$ is equivalent to the category of comodules over the exterior $k$-coalgebra in $r-1$ variables, where $r$ is the rank of $E$.

\begin{rem}
Again this is actually a Hopf algebra, which corresponds to the fact that when $p=2$ the tt-residue field is abelian.
\end{rem}

\begin{rem}
This is expected, via Koszul duality and taking orbit categories, from Section~\ref{ssec:alg} applied to a rational point on $\PP_k^{r-1}$. Let us elaborate a little: in characteristic $2$ the group algebra $kE$ of an elementary abelian $2$-group $E$ of rank $r$ is an exterior algebra on $r$ generators. This is Koszul with its standard grading and there is an associated equivalence of categories $\smodu^\ZZ kE \cong \sfD^\mathrm{b}(\PP^{r-1}_k)$ where $\smodu^\ZZ kE$ denotes the stable category of graded $kE$-modules (see \cite{MR4103346}*{Theorem~2.3.3} for a general statement and further references). Under this equivalence the grading shift on $\smodu^\ZZ kE$ corresponds to tensoring with $\Sigma\mcO(-1)$ and so $\smodu kE$ is equivalent to the orbit category of $\sfD^\mathrm{b}(\PP^{r-1}_k)$ by the latter functor. 

If we focus our attention at a single rational point on $\PP^{r-1}_k$ then the homological residue field corresponds to the graded exterior $k$-coalgebra in $r-1$ variables. Passing to the orbit category, in the neighbourhood of a point, simply makes the suspension the identity. This has the effect of removing the grading on the homological residue field.
\end{rem}

\subsection{Rational and even periodic $E_{\infty}$-rings}
Mathew has constructed tt-residue fields for the homotopy category of modules over a rational noetherian $E_{\infty}$-ring spectrum containing a unit in degree $2$ \cite{mathewresiduefields}, and over a (not necessarily rational) even periodic $E_\infty$-ring  spectrum with $\pi_0$ regular Noetherian \cite{mathewthicksubcategory}. Let $R$ be a ring spectrum fitting either of these descriptions.

  Mathew proves there is a residue field $\sfD(\kappa(\mathfrak{p}))$ for each prime $\mathfrak{p}$ of $\pi_0(R)$, where $\kappa(\mathfrak{p})$ is an $R$-algebra whose homotopy is a graded field, and the residue field functor takes an $R$-module $M$ to $\pi_*(M \otimes_R \kappa(\mathfrak{p}))$. So our framework applies: if $R$ is a such a ring spectrum with residue field $\kappa(\mathfrak{p})$, then the bialgebra giving the homological residue field is again $\pi_*(\kappa(\mathfrak{p}) \otimes_R \kappa(\mathfrak{p}))$, i.e.\ $\Tor^R_*(\kappa(\mathfrak{p}),\kappa(\mathfrak{p}))$.

In both cases Mathew shows that these residue fields collectively detect nilpotence. We conclude from \cite{Balmer20b}*{Theorem~5.4} that this describes all homological residue fields of $\sfD(R)$, under the assumptions on the homotopy of $R$. We do not know of a broader class of $E_{\infty}$ rings for which a classification of residue fields or homological residue fields is known-- often the construction of appropriate residue fields is a difficult problem.

\subsection{The closed point of cochains on $BG$} We conclude this section with another special case of interest. Let $G$ be a finite group, $BG$ the classifying space, and $k$ a field of characteristic dividing the order of $G$. Then we can consider the ring spectrum $R = C^*(BG;k)$ which comes with an augmentation, corresponding to a choice of base point, $\phi\colon R \to k$. This, analogously to the situation of a local ring, gives rise to the tt-residue field
\[
\phi^*\colon \sfD(R) \xymatrix{\ar[r]<0.5ex> \ar@{<-}[r]<-0.5ex> & } \sfD(k)\colon \phi_*
\]
with associated bialgebra
\[
\Tor_*^R(k,k) \cong \mathrm{H}_*(\Omega(BG^{\wedge}_p))^*
\]
the graded dual of the homology of loops on the p-completion of $BG$.




\section{Change of tt-residue field: an example}

Suppose that $\mcT$ is a big tt-category and we are given an exact monoidal functor to a tt-field $\mcF$, say $F\colon \mcT\to \mcF$, with right adjoint $U$. We could then consider another exact monoidal functor $G\colon \mcF \to \mcL$ with $\mcL$ also a tt-field (a tt-field extension if you will) and right adjoint $V$. The theory, namely \cite{BC20}*{Theorem~4.2}, tells us that the categories of comodules over the comonads corresponding to $FU$ and to $GFUV$, on $\Modu \mcF^c$ and $\Modu \mcL^c$ respectively, must be equivalent: they are both describing the corresponding homological residue field.

In this short section we spell out, in a very naive way, how this is realized and give a concrete example in the context of algebraic geometry. All of this is, more or less, standard descent theory.


\subsection{Generalities}

Suppose that we are given three adjunctions
\[
\sfA \xymatrix{\ar[r]<0.5ex>^-F \ar@{<-}[r]<-0.5ex>_-U & } \sfB \xymatrix{\ar[r]<0.5ex>^-G \ar@{<-}[r]<-0.5ex>_-V & } \sfC
\]
i.e.\ $(F,U)$, $(G,V)$ and $(GF,UV)$, all of which are comonadic. Thus we have equivalences
\[
\Comodu GFUV \cong \sfA \cong \Comodu FU \text{ and } \sfB \cong \Comodu GV
\]
giving a description of $\sfB$ and two descriptions of $\sfA$. From the equivalence $\sfB \cong \Comodu GV$ we deduce a comonad $D$ on $\Comodu GV$ corresponding to $FU$ on $\sfB$. Then we have
\[
\sfA \cong \Comodu D
\]
describing $\sfA$ as an iterated comodule category; we note that we are taking $D$-comodules in the category of $GV$-comodules, and so we can think of the coaction of $D$ as being $GV$-linear.


\subsection{A concrete example}

Let us consider the noetherian local ring $\RR[[x]]$ and the pair of adjunctions
\[
\sfD(\RR[[x]]) \xymatrix{\ar[r]<0.5ex>^-{\phi^*} \ar@{<-}[r]<-0.5ex>_-{\phi_*} & } \sfD(\RR) \xymatrix{\ar[r]<0.5ex>^-{i^*} \ar@{<-}[r]<-0.5ex>_-{i_*} & } \sfD(\CC)
\]
giving rise to two suitable functors to tt-fields capturing the information at the closed point $(x)$ of $\RR[[x]]$. 

We know from Section~\ref{ssec:alg} that using the residue field $\RR$ describes the homological residue field as the category of $\ZZ$-graded comodules $\Comodu^\ZZ \RR[\tau]/(\tau^2)$ where
\[
\Tor^{\RR[[x]]}_*(\RR,\RR) \cong \RR[\tau]/(\tau^2) \text{ with } \vert \tau \vert = -1 \text{ and } \Delta(\tau) = \tau \otimes 1 + 1\otimes \tau
\]
is the graded exterior coalgebra.

It is then natural to ask how to translate the abstract description of this category in terms of $\sfD(\RR[[x]]) \to \sfD(\CC)$, as discussed above, into something similarly concrete.

Let $G$ denote the Galois group of the extension $\RR\to \CC$. Then classical Galois descent tells us that $\Modu \RR$ is the category of comodules in $\Modu \CC$ for the Galois coring $C = \CC \otimes_\RR \CC$ with counit
the multiplication map $\CC\otimes_\RR\CC \to \CC$ and comultiplication
\[
\CC\otimes_\RR\CC \to \CC\otimes_\RR\CC\otimes_\CC\CC\otimes_\RR\CC \text{ by } \Delta(\alpha\otimes_\RR \beta) = (\alpha\otimes_\RR 1)\otimes_\CC(1\otimes_\RR \beta).
\]
(The interested reader can consult \cite{MR2075585} for a survey on descent from this point of view, and also for further references.) This carries over to the graded context (with trivial grading on $C$), and so 
\[
\Modu^\ZZ\RR \cong \Comodu^\ZZ C.
\]

In this context, passing to the iterated comodule description just boils down to tensoring our corings: we consider the coring over $\CC$ given by 
\[
\Tor^{\RR[[x]]}_*(\RR,\RR) \otimes_\RR \CC \otimes_\RR \CC = \RR[\tau]/(\tau^2) \otimes_\RR C.
\]
Then we get two descriptions of the homological residue field of $\sfD(\RR[[x]])$ at the closed point
\[
\Comodu^\ZZ_\RR \RR[\tau]/(\tau^2) \cong \Comodu^\ZZ_\CC \RR[\tau]/(\tau^2) \otimes_\RR C
\]
where the subscripts indicate the ambient category. Essentially, the right-hand side adds the Galois-group equivariance necessary to cut back down from $\CC$ to $\RR$.

\begin{rem}
One could, as is coming up in Section~\ref{sec:duality}, take the dual picture. Then in terms of $\RR$ the homological residue field would be graded modules for the standard graded exterior algebra $\Ext^*_{\RR[[x]]}(\RR,\RR)$, and in terms of $\CC$ it would be graded modules for the group algebra
\[
(\Ext^*_{\RR[[x]]}(\RR,\RR)\otimes_\RR \CC)G
\]
i.e.\ graded modules over the complex exterior algebra together with an action of the Galois group $G$ providing descent data.
\end{rem}



\section{From coalgebras to algebras}\label{sec:duality}

\setcounter{subsection}{1}

Given sufficient finiteness conditions we can move freely between the theory of coalgebras and that of algebras. Nothing here is new or non-standard, but in some cases it leads us to a more familiar vantage point.

In this section we work over a field $k$. Let $C$ be a colimit preserving $k$-linear comonad on $\Modu^\ZZ k$ (i.e.\ $C$ preserves arbitrary coproducts) compatible with the grading. By Theorem~\ref{thm:thepoint} we have a  corresponding graded coalgebra $C(k)$. We denote by $\comodu^\ZZ C(k)$ the category of finite dimensional graded $C(k)$-comodules, and by $\Comodu^\ZZ C(k)$ the category of all graded $C(k)$-comodules.

We state a couple of facts that will be used in the sequel.

\begin{lem}\label{lem:dual}
The graded $k$-dual $A = C(k)^*$ of $C(k)$ is naturally a graded $k$-algebra. 
\end{lem}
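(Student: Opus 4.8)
The plan is to take the standard graded linear dual and dualize the coalgebra structure maps of $C(k)$ to obtain the algebra structure maps on $A = C(k)^*$. Recall that $C(k)$ comes equipped with a comultiplication $\Delta \colon C(k) \to C(k) \otimes_k C(k)$ and counit $\varepsilon \colon C(k) \to k$, making it a graded coalgebra by Theorem~\ref{thm:thepoint}. Applying the graded $k$-dual functor $(-)^*$, which sends a graded vector space $V = \bigoplus_i V_i$ to $V^* = \bigoplus_i (V_{-i})^*$ (the graded dual, so that finiteness in each degree is preserved), I would define the multiplication on $A$ as the composite
\[
A \otimes_k A = C(k)^* \otimes_k C(k)^* \xrightarrow{\ \mu\ } (C(k)\otimes_k C(k))^* \xrightarrow{\ \Delta^*\ } C(k)^* = A,
\]
and the unit as $\eta \colon k = k^* \xrightarrow{\varepsilon^*} C(k)^* = A$.

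The key technical point, which is where I expect the only real subtlety to lie, is the map $\mu$ in the display above. In general the natural map $V^* \otimes_k W^* \to (V\otimes_k W)^*$ is not an isomorphism for infinite-dimensional spaces, so one must either restrict attention to the image of this map or insist on a finiteness hypothesis. Here the saving grace is the grading: provided $C(k)$ is finite-dimensional in each graded degree (which I would record as the operative finiteness condition, and which holds in the examples of interest since $C$ preserves coproducts and is determined by $C(k)$), the map $\mu$ is a degreewise isomorphism, because in each fixed total degree only finitely many summands $V_i^* \otimes W_j^*$ contribute. Thus $\mu$ is well-defined and the composite $\nabla = \Delta^* \circ \mu$ genuinely lands in $A$. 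I would state the needed finiteness assumption explicitly.

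The remaining verifications are formal: associativity of $\nabla$ follows from coassociativity of $\Delta$ by applying $(-)^*$ to the coassociativity square and using naturality of $\mu$ together with its compatibility with associativity constraints, and the unit axioms for $\eta$ follow dually from the counit axioms for $\varepsilon$. Each of these is obtained by dualizing a commutative diagram and does not require computation. The naturality claim in the statement refers to the fact that this construction is functorial in the comonad $C$ and independent of choices, which is immediate since every map in sight is canonical.

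Thus the main obstacle is simply ensuring the multiplication is well-defined, i.e.\ that the dual of the comultiplication composes correctly with the (degreewise) isomorphism $\mu$; once the graded finiteness condition guaranteeing $\mu \colon A \otimes_k A \xrightarrow{\sim} (C(k)\otimes_k C(k))^*$ is in place, the algebra axioms transport across the duality with no further effort.
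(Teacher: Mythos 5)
Your construction is the right one, and it is the standard argument the paper has in mind (the paper states this lemma without proof, treating it as folklore): dualize $\Delta$ and $\varepsilon$, using the canonical map $\mu\colon C(k)^*\otimes_k C(k)^* \to (C(k)\otimes_k C(k))^*$, and transport the coherence diagrams. However, your diagnosis of where the subtlety lies is backwards, and it leads you to insert a hypothesis that is not in the statement and is not needed. To define the multiplication $\nabla = \Delta^*\circ\mu$ you do not need $\mu$ to be an isomorphism, degreewise or otherwise: $\mu$ always exists as a canonical map, $\Delta^*$ points from $(C(k)\otimes_k C(k))^*$ \emph{into} $C(k)^*$ (and restricts to graded duals since $\Delta$ preserves degree), so the composite is well-defined with no finiteness assumption whatsoever. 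Associativity and unitality then follow from coassociativity and counitality purely by naturality of $\mu$ and its compatibility with the associativity and unit constraints --- in categorical language, $(-)^*$ is a (contravariant) lax monoidal functor, and lax monoidal functors send comonoids to monoids. Invertibility of $\mu$ is what you would need for the \emph{opposite} direction, dualizing an algebra to a coalgebra, where $m^*\colon A^* \to (A\otimes_k A)^*$ must be corrected by $\mu^{-1}$ to land in $A^*\otimes_k A^*$; that is also the direction in which the paper later imposes finite-dimensionality (for the double dual $C(k)^{**}\cong C(k)$ in Proposition~\ref{prop:equiv}).

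The consequence is that your proof, as written, establishes only a restricted version of the lemma: the lemma is stated for an arbitrary colimit-preserving $k$-linear graded comonad $C$, with no assumption that $C(k)$ has finite-dimensional graded pieces, and your ``operative finiteness condition'' is both unnecessary and unavailable in that generality (preservation of coproducts does not give degreewise finiteness). The fix costs nothing: delete the finiteness hypothesis, observe that $\nabla$ is defined as a composite of everywhere-defined canonical maps, and run the same diagram chases. With that correction your argument is complete and agrees with the intended proof.
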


\begin{lem}\label{lem:ind}
The ind-completion of $\comodu^\ZZ C(k)$ is $\Comodu^\ZZ C(k)$.
\end{lem}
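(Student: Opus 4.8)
The plan is to identify $\comodu^\ZZ C(k)$ with the finitely presented (equivalently, compact) objects of $\Comodu^\ZZ C(k)$ and to show that the latter is a locally finitely presented category; the statement then becomes a formal consequence of the universal property of the ind-completion as the free filtered-colimit cocompletion. Concretely, the fully faithful inclusion $\comodu^\ZZ C(k) \hookrightarrow \Comodu^\ZZ C(k)$ extends, since every finite-dimensional comodule will be seen to be compact, to a fully faithful filtered-colimit-preserving functor $\Ind(\comodu^\ZZ C(k)) \to \Comodu^\ZZ C(k)$, and the remaining task is to prove this functor is essentially surjective.

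The crux is the graded fundamental theorem of comodules: every object of $\Comodu^\ZZ C(k)$ is the directed union of its finite-dimensional graded subcomodules. To prove it I would take a homogeneous element $m$ of a comodule $M$ with coaction $\rho(m) = \sum_i n_i \otimes c_i$, where the homogeneous $c_i \in C(k)$ may be taken linearly independent, and use coassociativity together with the counit axiom to show that the homogeneous span of $m$ and the $n_i$ is a finite-dimensional subcomodule containing $m$. Every element of $M$ thus lies in a finite-dimensional subcomodule, and since finite sums of finite-dimensional subcomodules are again such, $M$ is the directed union of these subobjects. This simultaneously gives the essential surjectivity above and exhibits $\Comodu^\ZZ C(k)$ as a locally finite Grothendieck category.

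It remains to pin down both halves of the identification of compact objects. For compactness of a finite-dimensional comodule $N$, I would use that the forgetful functor $\Comodu^\ZZ C(k) \to \Modu^\ZZ k$ is left adjoint to the cofree comodule functor $(-)\otimes_k C(k)$, and hence preserves all colimits; filtered colimits of comodules are therefore computed as directed colimits of graded vector spaces, so any map out of the finite-dimensional $N$ has image landing in a single stage, whence $\Hom(N,-)$ commutes with filtered colimits. Conversely, if $M$ is compact then, writing $M$ as the directed union of its finite-dimensional subcomodules, the identity of $M$ factors through one of them, forcing $M$ to be finite-dimensional. Thus $\comodu^\ZZ C(k)$ is precisely the (essentially small, idempotent-complete) subcategory of compact objects, and $\Comodu^\ZZ C(k) \cong \Ind(\comodu^\ZZ C(k))$ follows from the standard theory of locally finitely presented categories.

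I expect the main obstacle to be the careful bookkeeping in the graded fundamental theorem — ensuring that the subobject generated by a homogeneous element is genuinely a graded subcomodule and is finite-dimensional, using only coassociativity and counitality — since every subsequent step is formal once local finiteness and the compactness of finite-dimensional comodules are in hand.
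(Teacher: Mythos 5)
Your proposal is correct and is essentially the paper's proof unpacked: the paper disposes of the lemma by citing \cite{Waterhouse}*{3.3}, which is precisely the fundamental theorem of comodules (every graded comodule is the directed union of its finite-dimensional graded subcomodules) that you prove directly, with the rest being the standard identification of a locally finite Grothendieck category with the ind-completion of its finite objects. The only point to tighten is your compactness argument: having the image of $N \to \colim_i M_i$ land in one stage is not yet a factorization, so one should either correct a linear lift to a comodule map at a later stage of the filtered system (using that $-\otimes_k C(k)$ preserves filtered colimits) or simply cite that noetherian objects in a locally noetherian Grothendieck category are finitely presented.
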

\begin{proof}
This follows from \cite{Waterhouse}*{3.3}. 
\end{proof}

Suppose that $C(k)$ is finite dimensional, i.e.\ $C$ restricts to $\modu^\ZZ k$. Starting from a finite dimensional graded right comodule $V$ we can dualize to obtain a graded left $A$-module (where $A = C(k)^*$). Taking the $k$-dual again we produce a graded right $A$-module, and this gives a covariant functor
\[
\Phi\colon \comodu^\ZZ C(k) \to \modu^\ZZ A.
\]
Since $A$ is finite dimensional this process is reversible: from  a finite dimensional right $A$-module $M$ we can dualize to a left module, and then dualize again to a right $A^* = C(k)^{**} \cong C(k)$-comodule. Thus $\Phi$ is an equivalence of categories. 

\begin{prop}\label{prop:equiv}
Let $C(k)$ be finite dimensional, with dual algebra $A$. The functor $\Phi$ induces an equivalence
\[
\Comodu^\ZZ C(k) \to \Modu^\ZZ A.
\]
\end{prop}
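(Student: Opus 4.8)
The plan is to upgrade the equivalence $\Phi$ on finite-dimensional objects, which was established in the paragraph preceding the statement, to an equivalence on the full categories of all comodules and all modules by taking ind-completions on both sides. The two key inputs are Lemma~\ref{lem:ind}, which identifies $\Comodu^\ZZ C(k)$ as the ind-completion of $\comodu^\ZZ C(k)$, and the standard dual fact that $\Modu^\ZZ A$ is the ind-completion of $\modu^\ZZ A$ whenever $A$ is finite dimensional (indeed finite-dimensional modules are exactly the compact objects in $\Modu^\ZZ A$, and every graded $A$-module is the filtered colimit of its finite-dimensional graded submodules).

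First I would record that $\comodu^\ZZ C(k)$ and $\modu^\ZZ A$ are essentially small abelian categories in which every object is a filtered colimit of compact objects, so that the universal property of ind-completion applies: $\Ind$ is functorial on exact, or at least finite-colimit-preserving, functors between such categories, and sends an equivalence to an equivalence. Thus the equivalence $\Phi\colon \comodu^\ZZ C(k) \xrightarrow{\sim} \modu^\ZZ A$ induces an equivalence $\Ind(\Phi)\colon \Ind(\comodu^\ZZ C(k)) \xrightarrow{\sim} \Ind(\modu^\ZZ A)$. Invoking Lemma~\ref{lem:ind} on the left and the module-theoretic analogue on the right identifies source and target with $\Comodu^\ZZ C(k)$ and $\Modu^\ZZ A$ respectively, yielding the desired equivalence.

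The only genuine point to verify is that this induced equivalence really is the colimit-preserving extension of $\Phi$ along the inclusion of finite-dimensional objects, i.e.\ that the square with $\Phi$ across the bottom and $\Ind(\Phi)$ across the top commutes up to natural isomorphism. This is immediate from the construction of $\Ind$ and the compatibility of the identifications in Lemma~\ref{lem:ind} with the Yoneda/inclusion functors; concretely, one writes an arbitrary comodule as the filtered colimit of its finite-dimensional subcomodules, applies $\Phi$ term by term, and observes that the result is the filtered colimit presentation of the corresponding $A$-module, using that $\Phi$ already sends finite-dimensional subcomodules to finite-dimensional submodules compatibly with inclusions.

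I expect the main obstacle to be bookkeeping rather than mathematical difficulty: one must check that the duality functor, which reverses direction on the level of left/right modules and is built from iterated $k$-duals, interacts correctly with filtered colimits. The subtlety is that $k$-dualization does \emph{not} commute with filtered colimits in general, so the argument must stay on the comodule/module side and use the double-dual description of $\Phi$ only on finite-dimensional objects, where everything is reflexive. Provided one is careful to extend $\Phi$ by continuity (filtered colimits) rather than by naively dualizing infinite-dimensional objects, the extension is forced and the equivalence follows formally from the two ind-completion identifications.
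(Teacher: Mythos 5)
Your proposal is correct and follows essentially the same route as the paper: the equivalence $\Phi$ on finite-dimensional objects induces an equivalence on ind-completions, identified with $\Comodu^\ZZ C(k)$ via Lemma~\ref{lem:ind} and with $\Modu^\ZZ A$ via the observation that finite dimensionality of $A$ makes $\modu^\ZZ A$ the finitely presented (equivalently compact) objects. Your closing caveat---that one must extend $\Phi$ by continuity rather than by dualizing infinite-dimensional comodules, since $k$-duality does not commute with filtered colimits---is a sensible precaution the paper leaves implicit, but it does not alter the argument.
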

\begin{proof}
We know $\Phi$ is an equivalence $\comodu^\ZZ C(k) \to \modu^\ZZ A$ and so it induces an equivalence on ind-completions. The ind-completion of $\comodu^\ZZ C(k)$ is $\Comodu^\ZZ C(k)$. As $A$ is finite dimensional, the category $\modu^\ZZ A$ consists precisely of the finitely presented graded $A$-modules. Thus the ind-completion of $\modu^\ZZ A$ is $\Modu^\ZZ A$ and we are done.
\end{proof}


\begin{rem}
If one is willing to take into account a topology this always works. One can describe $\Comodu^\ZZ C(k)$ as the category of graded modules over a pseudocompact graded algebra. However, in our examples this does not seem to lead to a more psychologically comforting description of the residue fields.
\end{rem}


\subsection{Example}

Let $(R,\mfm,k)$ be a regular local ring and let $\phi$ denote the quotient map $R\to k$. We let $\Lambda$ denote $\Ext^*_R(k,k)$ which, as $R$ is regular, is the exterior algebra on the tangent space viewed as a graded vector space in degree $1$.

\begin{thm}
The homological residue field of $\sfD(R)$ at $\mfm$ is $\Modu^\ZZ \Lambda$. 
\end{thm}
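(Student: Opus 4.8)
The plan is to chain together the computation of Section~\ref{ssec:alg} with the coalgebra--algebra duality established earlier in this section. First I would apply Section~\ref{ssec:alg} to the noetherian local ring $(R,\mfm,k)$: the homological residue field of $\sfD(R)$ at $\mfm$ is the category $\Comodu^\ZZ \Tor^R_*(k,k)$ of graded comodules over the graded coalgebra $C(k)=\phi^*\phi_*k = \Tor^R_*(k,k)$ associated to the comonad $\phi^*\phi_*$. So the only new work is to pass from this comodule description to the module description, and to identify the relevant algebra as $\Lambda$.

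The second step is to exploit regularity to gain finiteness. Since $R$ is regular local, the Koszul complex on a regular system of parameters is a finite free resolution of $k$, so $\Tor^R_*(k,k)$ is finite dimensional over $k$ and concentrated in degrees $0,\dots,\dim R$. In particular the comonad $C$ restricts to $\modu^\ZZ k$, which is exactly the hypothesis needed to invoke Proposition~\ref{prop:equiv}. Applying it gives an equivalence $\Comodu^\ZZ \Tor^R_*(k,k) \cong \Modu^\ZZ A$, where $A = \Tor^R_*(k,k)^*$ is the graded dual algebra furnished by Lemma~\ref{lem:dual}. It then remains to identify $A$ with $\Lambda = \Ext^*_R(k,k)$, after which the chain
\[
\Comodu^\ZZ \Tor^R_*(k,k) \cong \Modu^\ZZ \Tor^R_*(k,k)^* \cong \Modu^\ZZ \Lambda
\]
closes and the theorem follows.

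The main obstacle is precisely this last identification. One must check that the graded-algebra structure on $A=C(k)^*$ produced by Lemma~\ref{lem:dual} --- which is manufactured abstractly by dualizing the coalgebra structure on $\Tor^R_*(k,k)$ coming from the comonad $\phi^*\phi_*$ --- coincides with the classical Yoneda product on $\Ext^*_R(k,k)$. Concretely, I would verify that the coproduct on $\Tor^R_*(k,k)$ arising from $\phi^*\phi_*$ agrees with the standard commutative Hopf-algebra diagonal (induced by the diagonal on the bar or Koszul resolution), whose $k$-linear dual is well known to recover the Yoneda algebra; this is where grading and sign conventions (homological versus cohomological degree, and the placement of generators in degree $\pm 1$) must be tracked carefully. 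Once this compatibility is in place, regularity identifies $\Ext^*_R(k,k)$ with the exterior algebra on the tangent space in degree $1$, i.e.\ with $\Lambda$, and the remaining steps are formal. A monoidal refinement, should one want it, then follows from the fact that $\Lambda$ is a graded Hopf algebra (Remark~\ref{rem:algHopf}) together with Corollary~\ref{cor:monoidal}.
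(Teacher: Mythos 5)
Your proposal is correct and follows exactly the paper's route: Theorem~\ref{thm:thepoint} gives the comodule description, regularity gives finite dimensionality of $\Tor^R_*(k,k)$, and Proposition~\ref{prop:equiv} converts comodules to modules over the dual algebra. The only difference is that you carefully flag the identification of the dual algebra $\Tor^R_*(k,k)^*$ with the Yoneda algebra $\Ext^*_R(k,k)$ as a point requiring verification, whereas the paper simply asserts ``the dual algebra is $\Lambda$'' --- a reasonable elaboration of a step the authors treat as standard.
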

\begin{proof}
By Theorem~\ref{thm:thepoint} the homological residue field is equivalent to $\Comodu^\ZZ \Tor^R_*(k,k)$. As $R$ is regular the coalgebra $\Tor^R_*(k,k)$ is finite dimensional. The dual algebra is $\Lambda$, and Proposition~\ref{prop:equiv} tells us that $\Comodu^\ZZ \Tor^R_*(k,k) \cong \Modu^\ZZ \Lambda$.
\end{proof}





\bibliography{greg_bib}

\end{document}